\newtheorem{theorem}{Theorem}
\newtheorem{problem}{Problem}
\newtheorem{conjecture}{Conjecture}
\newtheorem{definition}{Definition}
\newtheorem{lemma}{Lemma}
\newtheorem{remark}{Remark}
\begin{document}
\title{\large\bf On Sequences  Containing \\ at Most 4 Pairwise Coprime Integers\footnote{This work was supported by the National Natural
Science Foundation of China, Grant No 11071121. }}
\date{}
\author{ Yong-Gao Chen\footnote{Email: ygchen@njnu.edu.cn}  and Xiao-Feng Zhou
\\
\small School of Mathematical Sciences, Nanjing Normal
University,\\ \small Nanjing 210046, P. R. CHINA}
 \maketitle

\begin{abstract} Let $f(n,k)$ be the largest number of positive integers not exceeding $n$
from which one cannot select $k+1$ pairwise coprime integers, and
let $E(n,k)$ be the set of positive integers which do not exceed
$n$ and can be divided by at least one of $p_1, p_2, \ldots ,
p_k$, where $p_i$ is the $i$-th prime. In 1962, P. Erd\H os
conjectured that $f(n,k)=|E(n,k)|$ for all  $n\ge p_k$. In 1973,
S.  L. G. Choi proved that the  conjecture is true for $k=3$. In
1994, Ahlswede and Kachatrian disproved the conjecture for
$k=212$. In this paper we prove that, for $n\ge 49$, if $A(n,4)$
is a set of positive integers not exceeding $n$ from which one
cannot select $5$ pairwise coprime integers and  $|A(n,4)|\ge
|E(n,4)|$, then $A(n,4)=E(n,4)$. In particular, the conjecture is
true for $k=4$. Several open problems and conjectures are posed
for further research.

\end{abstract}

{\bf 2010 Mathematics Subject Classifications:} 11B75, 05D05

{\bf Keywords:} extremal sets; pairwise coprime integers; Erd\H os
conjecture

\section{ Introduction}

Let $p_i$ be the $i$-th prime. Let $f(n,k)$ be the largest number
of positive integers not exceeding $n$ from which one cannot
select $k+1$ pairwise coprime integers, and let $E(n,k)$ be the
set of positive integers not exceeding $n$ and divisible by at
least one of $p_1, p_2, \ldots , p_k$. It is clear that $f(n,k)\ge
|E(n,k)|$ for all $n,k$ and $f(n,k)=n=|E(n,k)|+1$ for $n<p_k$. In
1962, P. Erd\H os \cite{Erdos1962}\cite{Erdos1965} conjectured
that $f(n,k)=|E(n,k)|$ for all  $n\ge p_k$. It is easy to see that
the conjecture is true for $k=1,2$. Let $A(n,k)$ be a set of
positive integers not exceeding $n$ from which one cannot select
$k+1$ pairwise coprime integers. In 1973, S.  L. G. Choi
\cite{Choi} proved that for $k=3$
 the  conjecture is true  and  for $n\ge 150$, if $|A(n,3)|=|E(n,3)|$,
then $A(n,3)= E(n,3)$ (it is remarked that it is possible to prove
this for $n\ge 92$). In 1985, the conjecture for $k=3$ is also
proved by Szab\'o and T\'oth. In 1994, R. Ahlswede and L. H.
Khachatrian \cite{Ahlswede} proved that
 the  conjecture is false for $k=212$. In the sequel, Erd\H os relaxed his conjecture to:
 For each $k$ there are only finitely many $n$ satisfying $f(n,k)\not= |E(n,k)|$.

 In this paper we prove the following results. In particular, the original conjecture of Erd\H os is true for $k=4$.

\begin{theorem}\label{mainthm1} Let $n\ge 55$. If  $|A(n,3)|\ge
|E(n,3)|$, then $A(n,3)= E(n,3)$. Furthermore 55 is the best
possible.

\end{theorem}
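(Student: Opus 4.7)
I would first establish the sharpness of $n = 55$ by exhibiting, for $n = 54$, the set
\[
A^{*} = \{2,4,\dots,54\}\ \cup\ \{3,9,15,21,27,33,39,45,51\}\ \cup\ \{7,35,49\},
\]
the union of the even numbers, the odd multiples of $3$, and the multiples of $7$ coprime to $6$, all bounded by $54$. Within each of the three blocks the elements share a common prime ($2$, $3$, or $7$), so by pigeonhole no four elements of $A^{*}$ can be pairwise coprime. A direct count gives $|A^{*}| = 27 + 9 + 3 = 39 = |E(54,3)|$, and $A^{*} \ne E(54,3)$ since, for example, $7 \in A^{*} \setminus E(54,3)$. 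So the threshold $n \ge 55$ cannot be lowered.

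For the uniqueness assertion, suppose $A := A(n,3)$ satisfies $|A| \ge |E(n,3)|$ with $A \ne E(n,3)$, and write $C := A \setminus E(n,3)$ and $D := E(n,3) \setminus A$, so that every element of $C$ is coprime to $30$ and $|C| \ge |D| \ge 1$. My first step is to rule out $1 \in A$: if $1 \in A$, then $A \setminus \{1\}$ has no three pairwise coprime integers, so by the elementary $k = 2$ case of Erd\H os's conjecture, $|A| \le |E(n,2)| + 1$; but for $n \ge 55$ the odd multiples of $5$ coprime to $3$ (namely $5, 25, 35, 55, \dots$) contribute at least $4$ to $|E(n,3)| - |E(n,2)|$, which gives $|A| \le |E(n,3)| - 3$, a contradiction.

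With $1 \notin A$, I would split on the structure of $C$. If there exist coprime $c_1, c_2 \in C$, then any two coprime elements of $A$ that are also coprime to $c_1c_2$ would form with $c_1, c_2$ a forbidden pairwise coprime $4$-tuple; hence $A' := \{a \in A : \gcd(a, c_1c_2) = 1\}$ has the property that any two of its elements share a common factor, which severely restricts $|A'|$. Together with $|A| \ge |E(n,3)|$ and the fact that $c_1, c_2$ have smallest prime factors $\ge 7$, this yields a contradiction. Otherwise every pair in $C$ shares a common prime; for $n < 77$ every element of $C$ is a prime power $p^{k}$ with $p \ge 7$ and so all of $C$ lies in a single such chain, while for larger $n$ a finer case analysis produces a similarly small bound on $|C|$. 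In either sub-case $|C|$ is too small to offset what must then be missing from $E(n,3) \cap A$, unless $\{2,3,5\} \subseteq A$; but then any $c \in C$ together with $2, 3, 5$ gives four pairwise coprime integers in $A$, again a contradiction.

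The principal difficulty will be the bookkeeping in the ``two coprime elements of $C$'' case when $n$ is near the threshold $55$: the competing configurations are close in size, and one likely has to handle a bounded residual range of $n$ by explicit inspection. The construction $A^{*}$ above is the main qualitative guide, since it identifies the ``multiples of $7$ coprime to $6$'' block as the source of the ambiguity that disappears exactly when $n \ge 55$ and the additional multiple of $5$ (namely $55$) enters $E(n,3)$.
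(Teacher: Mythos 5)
Your sharpness example is correct and is in fact the same set as the paper's witness $A(54,3)=(F_3\setminus\{5,25\})\cup\{7,49\}$, and the reduction to $1\notin A$ via the $k=2$ case is sound. The main body of the argument, however, has two genuine gaps, one in each branch of your dichotomy on $C=A\setminus E(n,3)$. In the branch where $C$ contains coprime $c_1,c_2$, the assertion that the structure of $A'=\{a\in A:\gcd(a,c_1c_2)=1\}$ ``severely restricts'' $|A|$ does not survive quantification. The natural bound is $|A|\le \lceil n/2\rceil+\#\{m\le n:\gcd(m,c_1c_2)>1\}$, and already for $c_1c_2=77$ this equals $28+7+5=40=|E(55,3)|$ at $n=55$ (no contradiction, and certainly no uniqueness); if $c_1c_2$ has three prime factors, e.g.\ $c_1=49$, $c_2=143$, the bound is asymptotically $n(1/2+1/7+1/11+1/13-\cdots)\approx 0.78n$, which exceeds $|E(n,3)|\sim \tfrac{11}{15}n$ for all $n$, so the contradiction simply is not there without a further idea (such as exploiting that $A\setminus A'$ itself contains no $4$ pairwise coprime integers). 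In the other branch, the claim that $|C|$ is small is false for large $n$: if every pair in $C$ shares a prime, $C$ can be the full set of multiples of $7$ coprime to $30$ in $[1,n]$, which has roughly $4n/105$ elements (already $8$ elements by $n=210$). Showing that $|D|=|E(n,3)\setminus A|$ then exceeds $|C|$ is essentially the entire content of the theorem, and ``a finer case analysis'' is where the proof has to live.

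The paper sidesteps both difficulties by never confronting an unbounded $C$ directly. For $55\le n\le 83$ it partitions $[1,n]$ into four explicit classes $H_1,\dots,H_4$ (primes together with $1$; $\{4,9,25,77\}$; $\{8,27,55,49\}$; the rest of $F_3$), bounds $|A\cap H_i|$ by $3$ for $i\le 3$ using pairwise coprimality inside each block, and extracts the equality $A=E(n,3)$ from the forced equalities in this count --- note that $55\in H_3$ is exactly what fails at $n=54$. For $n\ge 84$ it runs an induction on $n$ in windows $[30l-6,30l+a]$ modulo $30$ (Theorem \ref{generalthm} combined with Conjecture \ref{conj2} for $k=3$, proved by the $l$-good-set machinery), so that each step is a finite verification on an interval of length at most $30$. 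If you want to salvage your direct approach, you would need at minimum (i) a restricted analogue of $f(n,1)$ and $f(n,2)$ for integers coprime to a given squarefree modulus, with explicit error terms good enough down to $n=55$, and (ii) a genuine argument, not a size bound, for the case where $C$ consists of many multiples of a single prime $p\ge 7$; some form of localization or induction such as the paper's seems hard to avoid there.
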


\begin{theorem}\label{mainthm2}  Let $n\ge 7$.  If $|A(n,4)|\ge
|E(n,4)|$, then $|A(n,4)|= |E(n,4)|$ for $7\le n\le 48$ and
$A(n,4)= E(n,4)$ for $n\ge 49$. Furthermore 49 is the best
possible.

In particular, $f(n,4)=|E(n,4)|$ for all $n\ge 7$.
\end{theorem}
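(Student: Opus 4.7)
The plan is to establish both parts of the theorem: the count $f(n,4)=|E(n,4)|$ for $n\geq 7$, and, for $n\geq 49$, the rigidity $A(n,4)=E(n,4)$. The inequality $f(n,4)\geq|E(n,4)|$ is immediate, since any $5$ elements of $E(n,4)$ would, by pigeonhole on $\{2,3,5,7\}$, include two sharing a prime factor. I would split the argument into a finite range $7\leq n\leq 48$, treated by direct case analysis (or a short computer verification), and the main range $n\geq 49$. The small range must also supply the sharpness claim: at $n=48$ one produces an explicit $A\neq E(48,4)$ with $|A|=|E(48,4)|$ and no $5$ pairwise coprime integers, typically by swapping a carefully chosen multiple of a prime in $\{2,3,5,7\}$ in $E(48,4)$ for an integer coprime to $210$, thereby certifying that $49$ is best possible.

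For $n\geq 49$, I would argue by contradiction. Suppose $A\subseteq\{1,\ldots,n\}$ has $|A|\geq|E(n,4)|$, contains no $5$ pairwise coprime elements, and $A\not\subseteq E(n,4)$. Choose $a\in A\setminus E(n,4)$, so $\gcd(a,210)=1$. It then suffices to find $b_1,b_2,b_3,b_4\in A$ that are pairwise coprime, each coprime to $a$, with $p_i\mid b_i$ for $p_1,p_2,p_3,p_4=2,3,5,7$; adjoining $a$ produces $5$ pairwise coprime elements of $A$, contradicting the hypothesis. Setting $B=E(n,4)\setminus A$, the density assumption rearranges to $|B|\leq|A\setminus E(n,4)|$, so very few multiples of any given $p_i$ can be missing from $A$ unless $A$ is correspondingly rich in integers coprime to $210$.

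The main technical step is the selection of $b_1,\ldots,b_4$. My first attempt would use canonical candidates inside $E(n,4)$: the prime powers $2^{\alpha},3^{\beta},5^{\gamma},7^{\delta}\leq n$ and the products $p_ip_j$ with $i\neq j\in\{1,2,3,4\}$; these are automatically coprime to $a$ since $a$'s prime factors are all $\geq 11$, and for $n\geq 49=7^2$ the quartet $\{4,9,25,49\}$ already lies in $[1,n]$. Where the canonical list is insufficient, I would fall back on elements of the form $p_iq$ with $q\geq 11$ a prime not dividing $a$, whose existence in the requisite ranges can be guaranteed by Bertrand-type estimates together with the bound on the number of prime divisors of $a\leq n$. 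The main obstacle I anticipate is the case bookkeeping: ensuring that after removing the at most $|B|$ forbidden multiples from each of the four classes, one can still select pairwise coprime $b_i$'s—which is where the threshold $n\geq 49$ enters. I would expect the general selection argument to cover $n$ beyond some modest $n_0$, with a finite numerical check handling the transitional range $49\leq n\leq n_0$.
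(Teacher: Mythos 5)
Your framework for $n\ge 49$ --- pick $a\in A\setminus E(n,4)$ and try to complete it to a $5$-element pairwise coprime subset of $A$ --- is a reasonable starting point, but the proposal never supplies the one argument that actually carries the theorem. The canonical quartet $\{4,9,25,49\}$ need not lie in $A$: the hypothesis is only $|A|\ge|E(n,4)|$, so $A$ may discard many elements of $E(n,4)$ (including every convenient prime power and every product $p_ip_j$) in exchange for elements coprime to $210$, of which there are roughly $\tfrac{48}{210}n$ available. Your own reduction makes the circularity visible: you note that ``very few multiples of any given $p_i$ can be missing from $A$ unless $A$ is correspondingly rich in integers coprime to $210$,'' but the entire content of the theorem is to rule out exactly that scenario --- and for $k=212$ Ahlswede and Khachatrian showed such a trade \emph{can} be profitable, so no soft argument (Bertrand-type estimates, counting prime divisors of $a$) can settle it. What is needed is a charging scheme showing that every element of $A$ coprime to $210$ forces a distinct compensating loss from $E(n,4)$. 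The paper does this by exhibiting explicit families of pairwise coprime ``cliques'' mixing elements of $F_4$ with elements coprime to $210$ (the good sets $U_i\cup S_i$ of Lemma \ref{4-set}): each clique admits at most $4$ elements of $A$, and the cliques are arranged so that their $F_4$-parts are disjoint and their coprime-to-$210$ parts cover all residues in $T_4$. Producing these families is not ``case bookkeeping'' appended to a selection argument; it is the proof, and it occupies the $19$-case analysis of Section 6.

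A second structural omission: you treat ``$n$ beyond some modest $n_0$'' by a single global selection, but there is no mechanism given for unbounded $n$. The paper localizes via $n=210l+a$ with $-10\le a\le 199$, proves the clique inequality on the window $[210l-10,210l+a]$ (Conjecture \ref{conj2} for $k=4$, which is a finite, $l$-uniform statement), and then runs an induction on $n$ (Theorem \ref{generalthm}) in which the induction hypothesis on $[1,210l-11]$ forces $p_1,\dots,p_4\in A$ and hence $A\subseteq E(n,4)$. Separately, the range $7\le n\le 199$ is handled not by your augmentation argument but by a direct partition of $[1,n]$ into five explicit $5$-element coprime cliques $W_1,\dots,W_5$ plus a remainder $W_6\subseteq F_4$, each clique charged against a $4$-element subset of $F_4$; the rigidity for $49\le n\le 199$ then drops out of the forced equalities. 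Your proposal correctly identifies the trivial inequality $f(n,4)\ge|E(n,4)|$ and the extremal example at $n=48$, but as written it does not contain a proof of either $f(n,4)\le|E(n,4)|$ or the rigidity statement.
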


Let $F_k$ be the set of integers which can be divided by at least
one of $p_1, p_2,\dots , p_k$. Let $[m,n]=\{ m, m+1,\ldots , n\}$.
By definition, $F_k\cap [1, n]=E(n, k)$ for all $n\ge 1$.

 We pose the following  conjectures:

\begin{conjecture}\label{conj1} Let $k\ge 3$. For $n=p_1p_2\dots p_k-p_{k+1}$ we have
$f(n,k)=|E(n,k)|$.\end{conjecture}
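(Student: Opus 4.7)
The plan is to follow the template that succeeded for $k=3$ (Choi) and for $k=4$ (Theorem~\ref{mainthm2}): argue by contradiction, assuming there is $A\subseteq[1,n]$ with $|A|>|E(n,k)|$ and no $k+1$ pairwise coprime elements, and then exhibit such a system. Write $P_k=p_1p_2\cdots p_k$ and partition $[1,n]=E(n,k)\cup S$, where $S$ is the set of positive integers $\le n$ coprime to $P_k$. The counting assumption forces $A\cap S\neq\emptyset$; pick any $b\in A\cap S$, so $b=1$ or all prime factors of $b$ lie in $\{p_{k+1},p_{k+2},\dots\}$, and in either case $\gcd(b,P_k)=1$. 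To reach a contradiction it then suffices to find pairwise coprime $a_1,\dots,a_k\in A$ with $p_i\mid a_i$ and $\gcd(a_i,b)=1$.

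For each $i\le k$, set
$$A_i=\{a\in A:p_i\mid a,\ \gcd(a,p_j)=1 \text{ for all } j\le k,\ j\ne i,\ \gcd(a,b)=1\}.$$
I would try to select distinct representatives $a_i\in A_i$ by a Hall-type argument. The specific value $n=P_k-p_{k+1}$ enters here: since $n<P_k$, every integer in $[1,n]$ is divisible by at most $k-1$ of the primes $p_1,\dots,p_k$, and the small deficit $P_k-n=p_{k+1}$ tightly bounds how many integers $\le n$ simultaneously carry many small prime factors. A careful inclusion-exclusion count of each $|A_i|$ against the loss $|E(n,k)\setminus A|\le|A\cap S|-1$ should verify the Hall condition and produce the required matching, which together with $b$ contradicts the hypothesis on $A$.

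The main obstacle is exactly this matching step. Because the Ahlswede--Khachatrian construction refutes the Erd\H os conjecture for some $n$ at $k=212$, no purely combinatorial argument can succeed for all $n$, so the hypothesis $n=P_k-p_{k+1}$ has to be used essentially rather than as a convenient bound. Concretely, one must rule out that the few elements of $E(n,k)\setminus A$ conspire to empty some single fibre $A_i$; when $k$ is small this can be handled by direct case analysis (as in the proofs for $k=3,4$), but as $k$ grows the configurations proliferate and finer tools are needed, most likely a structural description of $S\cap[1,P_k-p_{k+1}]$ showing that most of its elements are primes (so that choices of $b$ and of representatives $a_i$ interact in a controlled way). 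I would therefore begin the attack with a preliminary lemma about the arithmetic of $S\cap[1,P_k-p_{k+1}]$ before attempting the Hall step; without such a lemma, the combinatorial case analysis appears to scale badly with $k$, which is presumably why the statement is only posed as a conjecture.
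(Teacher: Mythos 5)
Your proposal does not reach a proof, and you say as much yourself: the entire weight of the argument rests on the Hall-type matching step, which is never carried out. Worse, even if the Hall condition for the fibres $A_i$ were verified, the conclusion would not follow as stated: a system of \emph{distinct} representatives $a_i\in A_i$ need not be \emph{pairwise coprime}, because your definition of $A_i$ only controls divisibility by $p_1,\dots,p_k$ and by the primes of $b$. Two representatives such as $2\cdot 11\cdot m$ and $3\cdot 11\cdot m'$ can lie in different fibres and still share the factor $11>p_k$, so Hall's theorem is not the right tool here; you would need representatives that avoid common large prime factors, which is exactly the combinatorial difficulty you have not addressed. There is also a small slip of emphasis: insisting $p_i\mid a_i$ for each $i$ is an unnecessary restriction on the $k+1$ pairwise coprime integers you seek, and it is not what makes the problem hard.

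The paper's route (for $k=4$, with $k=3$ quoted from Choi) is quite different and avoids any matching or contradiction argument. It partitions $[1,n]$ with $n=210-11=199$ into: the set $W_1$ of primes together with $1$; four explicit $5$-element sets $W_2,\dots,W_5$ of pairwise coprime integers (e.g.\ $\{11^2,7^2,5^2,3^2,2^2\}$, $\{11\cdot 13, 7\cdot 19, 5\cdot 23, 3\cdot 17, 2^5\}$, \dots), each containing exactly one element coprime to $210$; and a remainder $W_6\subseteq F_4$. Since $A(n,4)$ has no $5$ pairwise coprime elements, $|A(n,4)\cap W_i|\le 4$ for $i\le 5$, and each such bound is matched against $4$ designated elements of $F_4$ inside $W_i$; summing gives $|A(n,4)|\le |F_4\cap[1,n]|$ directly. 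The hypothesis $n\le p_1\cdots p_k-p_{k+1}$ is used only to keep the set of integers coprime to $p_1\cdots p_k$ small enough to be swallowed by a handful of explicit coprime blocks; this is why the paper remarks that any fixed $k>4$ can in principle be handled the same way, while the general-$k$ statement remains a conjecture. If you want to salvage your plan, the realistic move is to abandon the matching and instead construct such an explicit coprime covering system for the value of $k$ at hand.
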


\begin{conjecture} \label{conj2}Let $k\ge 3,l$ be two positive integers. For any
integer $a$ with $-p_{k+1}+1\le a\le p_1p_2\dots p_k-p_{k+1}$, if
$B_{k,l}(a)$ is a subset of $[-p_{k+1}+1, a]$ such that one cannot
select $k+1$ pairwise coprime integers from $\{ p_1p_2\dots p_kl+b :
b\in B_{k,l}(a)\}$,
 then
$$ |B_{k,l}(a)|\le |F_k\cap [-p_{k+1}+1, a]|.$$
\end{conjecture}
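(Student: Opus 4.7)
The plan is to proceed by induction on $a$ in the range $-p_{k+1}+1 \le a \le p_1p_2\cdots p_k - p_{k+1}$. The starting observation is that $\gcd(p_1p_2\cdots p_k l + b,\ p_i) = \gcd(b,p_i)$ for every $i\le k$, so the ``canonical'' choice $B_{k,l}(a) = F_k \cap [-p_{k+1}+1, a]$ already satisfies the avoidance condition: any $k+1$ elements of its shift must, by pigeonhole on $\{p_1,\dots,p_k\}$, share a common small prime factor. The content of Conjecture~\ref{conj2} is that this canonical choice is optimal.

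For the base case $a = -p_{k+1}+1$, since $p_{k+1}\ge 3$ is odd we have $2 \mid p_{k+1}-1$, so $-p_{k+1}+1 \in F_k$ and the claim is immediate. For the inductive step I would split into three cases. If $a \notin B_{k,l}(a)$, then $B_{k,l}(a) \subseteq [-p_{k+1}+1, a-1]$ and the inductive hypothesis applies because $|F_k\cap[-p_{k+1}+1,\cdot]|$ is nondecreasing. If $a \in B_{k,l}(a)\cap F_k$, then $B_{k,l}(a)\setminus\{a\}$ still satisfies the avoidance condition at level $a-1$, and both sides of the target inequality drop by exactly one. The genuine difficulty is the residual case $a \in B_{k,l}(a)\setminus F_k$.

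In this hard case, $m := p_1p_2\cdots p_k l + a$ has all prime factors at least $p_{k+1}$. The goal is to construct an injection $B_{k,l}(a) \hookrightarrow F_k\cap[-p_{k+1}+1,a]$. A natural strategy is to let $q$ be the smallest prime factor of $m$ (so $q\ge p_{k+1}$) and examine the multiples of $q$ inside the shifted block $\{p_1p_2\cdots p_k l + b : b\in[-p_{k+1}+1,a]\}$. Because $m$ is already ``used'' by $q$, any further element of $B_{k,l}(a)$ coprime to $p_1\cdots p_k q$ would combine with $m$ and with $k-1$ elements of $F_k$ (chosen one per prime $p_i$, which is possible thanks to the window length) to yield $k+1$ pairwise coprime integers. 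Iterating this observation over the large prime factors actually appearing in the block should force each element of $B_{k,l}(a)\setminus F_k$ to be paired with a specific unused residue class inside $F_k\cap[-p_{k+1}+1,a]$, producing the desired injection.

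The main obstacle is precisely the phenomenon that Ahlswede and Khachatrian exploited to refute the unrestricted Erd\H os conjecture at $k=212$: sufficiently intricate families supported on large primes can beat $F_k$. The restriction $a\le p_1p_2\cdots p_k - p_{k+1}$ confines the window to a single residue system modulo $p_1p_2\cdots p_k$, which should block that kind of counterexample; but turning this intuition into a uniform injection for \emph{all} $k$ and $l$ appears to require either an inductive use of the conjecture at smaller $k$ (for which only $k=3,4$ are known via Theorems~\ref{mainthm1} and~\ref{mainthm2}), or a quantitative density comparison between integers supported on $\{p_{k+1}, p_{k+2},\dots\}$ and integers divisible by one of $p_1,\dots,p_k$ inside windows of length at most $p_1\cdots p_k$. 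Controlling the $l$-dependent variation in which large primes actually show up in the block is where I expect genuinely new ideas, beyond those used for $k=4$, to be required.
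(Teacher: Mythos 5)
You should first note that the statement you are attacking is presented in the paper as a \emph{conjecture} for general $k\ge 3$; the paper proves it only for $k=3$ and $k=4$ (Theorem \ref{conj3-4thm}). Its proof runs an induction on $a$ whose easy reductions coincide with yours: Lemma \ref{lemma1} treats $a\in\{-1,1\}$, and Lemma \ref{lemma3} disposes of the cases $a\notin B_{k,l}(a)$ and $a\in F_k$ by passing to $a-1$, exactly as in your second and third cases. The entire difficulty sits in the residual case $a\in B_{k,l}(a)\cap T_k$, which the paper handles via Lemma \ref{4-set}: one covers $B_{k,l}(a)\cap T_k\cap[b,a]$ by sets $S_1,\dots,S_r$ and pairs each $S_i$ with a disjoint block $U_i\subseteq F_k\cap[b,a]$ of size $k$ or $k-1$ so that $U_i\cup S_i$ (with $a$ adjoined when $|U_i|=k-1$) is an $l$-good set; these blocks are then exhibited explicitly, residue by residue, for every $a\in T_3$ and every $a\in T_4$, occasionally split according to divisibility conditions such as $11\mid 210l+71$. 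Your sketch supplies no substitute for that construction, and you yourself concede that a uniform argument for all $k$ and $l$ would need new ideas; so the proposal does not prove the statement, for $k=3,4$ or in general.

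Beyond incompleteness, the one concrete mechanism you offer in the hard case is flawed. You claim that an element of $B_{k,l}(a)$ coprime to $p_1\cdots p_k q$ ``would combine with $m$ and with $k-1$ elements of $F_k$ (chosen one per prime $p_i$)'' to produce $k+1$ pairwise coprime integers and hence contradict the avoidance hypothesis. But that hypothesis only forbids $k+1$ pairwise coprime integers drawn from $\{p_1\cdots p_k l+b : b\in B_{k,l}(a)\}$; the $k-1$ auxiliary elements of $F_k$ need not lie in $B_{k,l}(a)$, so no contradiction is available. This is precisely why the paper's Lemma \ref{4-set} uses the $F_k$-part $U_i$ only to \emph{bound} $|B_{k,l}(a)\cap(U_i\cup S_i)|$ by $|U_i|$ via Lemma \ref{goodset}, never to assert that any element of $U_i$ belongs to $B_{k,l}(a)$. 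Your closing intuition --- that the window length $\le p_1\cdots p_k$ should block Ahlswede--Khachatrian-type configurations --- is a reasonable heuristic for why the conjecture might be true, but it is not an argument, and the $l$-dependence of which large primes occur in the block (visible already in the paper's Case 6 for $k=4$) is exactly where a uniform injection would have to do real work.
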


\begin{conjecture} For any integer $k\ge 3$ there exists an
integer $n_k$ such that for $n\ge n_k$, if  $|A(n,k)|\ge |E(n,k)|$,
then $A(n,k)=E(n,k)$.
\end{conjecture}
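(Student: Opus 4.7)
The plan is to generalize the ``swap and count'' stability strategy underpinning Theorems \ref{mainthm1} and \ref{mainthm2} to arbitrary $k \ge 3$. Fix such $k$ and let $A = A(n,k)$ with $|A| \ge |E(n,k)|$. Write $B := A \setminus E(n,k)$; then $|E(n,k) \setminus A| \ge |B|$. The strategy is to show that, for $n$ sufficiently large, the presence of any element in $B$ forces $A$ to omit many additional elements of $E(n,k)$, eventually yielding $|E(n,k) \setminus A| > |B|$, a contradiction. Hence $B = \emptyset$, i.e., $A \subseteq E(n,k)$, and equality then follows from the size hypothesis.

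The key local observation is that every $b \in B$ has all prime factors exceeding $p_k$, so $\gcd(b,p_i)=1$ for each $i \le k$. Consequently, if $A$ contained pairwise coprime $a_1,\dots,a_k$ with $p_i \mid a_i$ and each $a_i$ coprime to $b$, then $\{b,a_1,\dots,a_k\}$ would be $k+1$ pairwise coprime elements of $A$. Thus $A$ must destroy every pairwise-coprime ``$b$-good $k$-selector'' drawn from $E(n,k)$. A sieve count of the number of such $k$-tuples $(p_1 m_1,\dots,p_k m_k)$ with $p_i m_i \le n$, with each $m_i$ coprime to $b \prod_{j\ne i} p_j$ and pairwise coprime, coupled with a covering argument (pairwise-coprime tuples are ``spread out'' enough that no single omitted element covers many of them), should yield a bound of the form $|E(n,k) \setminus A| \ge |B| + g(n,k)$ with $g(n,k) \to \infty$ as $n \to \infty$ for fixed $k$. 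Feeding this back contradicts $|A| \ge |E(n,k)|$ unless $B = \emptyset$.

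The principal obstacle is controlling the Ahlswede--Khachatrian phenomenon \cite{Ahlswede}: for $k \ge 212$ there exist $n$ and sets with $|A(n,k)| > |E(n,k)|$, obtained by trading a few multiples of small primes for cleverly chosen larger composites. Any proof of the conjecture must force such trades to cost more than they gain once $n$ passes some threshold $n_k$. A crude argument kills ``two-prime'' trades involving $p,q > p_k$ with $pq > n$, but the genuine counterexamples are more delicate and may involve longer chains of primes, so one would presumably proceed by an inductive structural analysis of extremal $A$ stratified by the number of distinct large prime factors of its elements in $B$, matching each attempted swap to a quantifiable loss elsewhere. Even granted this, the resulting bound on $n_k$ is likely to be far from sharp; pinning down the optimal $n_k$ for $k \ge 5$ in the spirit of $n_3=55$ and $n_4=49$ appears to require a separate, increasingly intricate, case analysis for each $k$.
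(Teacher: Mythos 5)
First, a point of orientation: the statement you are proving is posed in the paper as an \emph{open conjecture}. The authors establish it only for $k=3$ and $k=4$ (Theorems \ref{mainthm1} and \ref{mainthm2}), by combining the induction of Theorem \ref{generalthm} with a finite verification of Conjecture \ref{conj2} on residues modulo $p_1p_2\cdots p_k$, carried out through an explicit list of good sets; no argument for general $k$ is claimed. So there is no proof in the paper to compare against, and your attempt has to stand entirely on its own.

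It does not stand. The whole weight of your argument rests on the sentence asserting that a sieve count ``coupled with a covering argument'' should yield $|E(n,k)\setminus A|\ge |B|+g(n,k)$ with $g(n,k)\to\infty$; that bound is never derived, and it is not a routine estimate --- it is essentially the conjecture itself in disguise. What has to be shown is that the cheapest way to destroy, for every $b\in B$, all pairwise coprime $k$-tuples in $A\cap E(n,k)$ compatible with $b$ costs strictly more than $|B|$ omissions from $E(n,k)$. The Ahlswede--Khachatrian construction \cite{Ahlswede} shows that for $k=212$ and suitable $n$ this trade is actually profitable, so a correct proof must isolate precisely which feature of ``$n$ large relative to $k$'' forbids it; your sketch names an ``inductive structural analysis stratified by the number of large prime factors'' but does not carry it out, and since $|B|$ is not a priori bounded (it can have size of order $n$), the required saving is not a bounded quantity that a crude covering estimate could supply. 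There is also a sign slip at the outset: $|A|\ge |E(n,k)|$ gives $|E(n,k)\setminus A|\le |B|$, not $\ge$; the intended contradiction survives once the inequality is corrected, but as written the setup is inconsistent. In short, this is a plausible program, broadly in the spirit of the stability arguments the paper uses for $k=3,4$, but the decisive quantitative step is missing, and the statement remains open for $k\ge 5$.
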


\begin{remark} Let $E(k)$ denote the least such $n_k$. By Theorems
\ref{mainthm1} and \ref{mainthm2} we have $E(3)=55$ and $E(4)=49$.
Similarly, one may derive that $E(1)=4$ and $E(2)=9$.

Let $A(p_k^2-1, k)=(F_k\cap [1, p_k^2-1]\cup \{ p_{k+1}\}
)\setminus \{ p_k\} $. Then $A(p_k^2-1, k)\not= E(p_k^2-1, k)$ and
$|A(p_k^2-1, k)|= |E(p_k^2-1, k)|$. So $E(k)\ge p_k^2$.
\end{remark}

\begin{theorem}\label{conj3-4thm} Conjectures \ref{conj1} and
\ref{conj2} are true for $k=3,4$.\end{theorem}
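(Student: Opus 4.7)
The plan has two parts. \emph{Conjecture 1 for $k=3,4$} follows directly from results already cited: for $k=3$, $n = p_1 p_2 p_3 - p_4 = 23 \ge p_3$, so Choi's theorem yields $f(23,3) = |E(23,3)|$; for $k=4$, $n = p_1 p_2 p_3 p_4 - p_5 = 199 \ge 7$, so Theorem~\ref{mainthm2} yields $f(199,4) = |E(199,4)|$.

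For \emph{Conjecture 2}, write $M = p_1 \cdots p_k$, $n' = Ml + a$, and form the disjoint union
\[ A := \{Ml+b : b \in B_{k,l}(a)\} \,\cup\, (F_k \cap [1, Ml - p_{k+1}]) \subseteq [1,n']. \]
Then $|A| = |B_{k,l}(a)| + |F_k \cap [1, Ml-p_{k+1}]|$ and $|E(n',k)| = |F_k \cap [1, Ml-p_{k+1}]| + |F_k \cap [-p_{k+1}+1, a]|$. If $B_{k,l}(a) \subseteq F_k$, then $A \subseteq F_k$ contains at most $k$ pairwise coprime integers, so Theorem~\ref{mainthm2} (for $k=4$) or Choi's theorem (for $k=3$) applied to $A$ gives $|A| \le |E(n',k)|$, which rearranges to $|B_{k,l}(a)| \le |F_k \cap [-p_{k+1}+1, a]|$.

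It remains to handle $X := B_{k,l}(a) \setminus F_k \ne \emptyset$. Set $Y := (F_k \cap [-p_{k+1}+1, a]) \setminus B_{k,l}(a)$; the conjecture reduces to $|X| \le |Y|$, which we establish by constructing an injection $\Phi : X \to Y$. For each $b^* \in X$ we have $\gcd(Ml+b^*, M) = 1$, and we exhibit witnesses $c_1, \ldots, c_k \in F_k \cap [-p_{k+1}+1, a]$ with $p_i \mid c_i$ such that $\{Ml+b^*, Ml+c_1, \ldots, Ml+c_k\}$ is pairwise coprime. Since $B_{k,l}(a)$ admits no $k+1$ pairwise coprime after shifting, some $c_i$ lies in $Y$; we assign $\Phi(b^*)$ to such a $c_i$, and confirm injectivity via Hall's marriage theorem.

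The main obstacle is verifying Hall's condition across all residue classes of $l$. Obstructions to pairwise coprimality between $Ml+b^*$ and $Ml+c_i$ arise from primes $q \ge p_{k+1}$ dividing both; since $|c_i - b^*| < M$, these primes satisfy $q < M$, a finite family: for $k=3$ the relevant primes are $\{7,11,13,17,19,23,29\}$ with $|X| \le \phi(30) = 8$, and for $k=4$ they lie in $[11,199]$ with $|X| \le \phi(210) = 48$. The verification proceeds by case analysis on the residue of $l$ modulo each obstructing prime, exploiting the multiple $p_i$-multiples available in $[-p_{k+1}+1, a]$ to produce alternative $c_i$-choices so that, for every $S \subseteq X$, the neighbourhood $\bigcup_{b^* \in S}\{\text{valid } c_i\}$ in $Y$ has size at least $|S|$. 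For $k=3$ this enumeration is brief; for $k=4$ it is more intricate but finite, and parallels the combinatorial techniques used in the proof of Theorem~\ref{mainthm2}.
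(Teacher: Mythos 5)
Your treatment of Conjecture \ref{conj1} is essentially the paper's: for $k=3$ it follows from Choi's theorem, and for $k=4$ from the direct counting part of Theorem \ref{mainthm2} (the partition into $W_1,\dots,W_6$), which is valid since that part of Theorem \ref{mainthm2} does not rely on Conjecture \ref{conj2}. But note that in your Conjecture \ref{conj2} argument you apply Theorem \ref{mainthm2} to a set $A\subseteq[1,n']$ with $n'=Ml+a$ arbitrarily large; for $n'>199$ the paper proves Theorem \ref{mainthm2} \emph{from} Conjecture \ref{conj2} via Theorem \ref{generalthm}, so that invocation is circular. (It happens to be harmless only because in that branch $B_{k,l}(a)\subseteq F_k\cap[-p_{k+1}+1,a]$ gives the bound with no theorem at all.)

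The genuine gap is in the main case $X=B_{k,l}(a)\setminus F_k\neq\emptyset$. Your plan --- for each $b^*\in X$ produce witnesses $c_1,\dots,c_k$ with $p_i\mid c_i$ and $\{Ml+b^*,Ml+c_1,\dots,Ml+c_k\}$ pairwise coprime, conclude some $c_i\in Y$, and make the assignment injective via Hall --- is only a strategy statement; the entire content of the theorem is the verification you defer to ``a finite but intricate case analysis,'' and that verification is never performed. Worse, the Hall formulation as stated does not close: the neighbourhood of $b^*$ is defined as the set of valid witnesses \emph{that lie in} $Y$, but membership in $Y$ depends on the unknown set $B_{k,l}(a)$ itself, so Hall's condition cannot be checked by enumerating residues of $l$ alone. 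When several elements of $X$ must share a common pool of multiples of $p_1,\dots,p_k$ (unavoidable for $k=4$, where $|T_4|=48$ while the multiples of $2,3,5,7$ near a given $b^*$ are few), one pairwise coprime $(k+1)$-tuple per $b^*$ forces only one absence from $B_{k,l}(a)$ \emph{per tuple}, not one per element of $X$; you need a genuinely different bookkeeping. This is exactly what the paper's Lemma \ref{4-set} supplies: pairwise disjoint blocks $U_i\subseteq F_k$ with $|U_i|\in\{k-1,k\}$, sets $S_i\subseteq T_k$ covering $B_{k,l}(a)\cap T_k\cap[b,a]$ with $U_i\cup S_i$ (or $U_i\cup S_i\cup\{a\}$) an $l$-good set, giving $|B_{k,l}(a)\cap(U_i\cup S_i)|\le|U_i|$ and hence the required inequality block by block --- combined with an induction on $a$ (via Lemma \ref{lemma3}) that lets one work on a short interval $[b,a]$ rather than all of $[-p_{k+1}+1,a]$ at once. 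The paper also sidesteps almost all dependence on $l$ by choosing the blocks to be good sets (differences with no prime factor exceeding $p_k$), splitting on a residue of $l$ only in one case ($a=71$, $11\mid 210l+71$); your plan, by contrast, would require residue analysis modulo every prime in $[11,199]$. Until the explicit blocks (or an equivalent certified case analysis) are exhibited for every $a\in T_3$ and $a\in T_4$, the proof of Conjecture \ref{conj2} is not established.
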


\begin{remark} Conjecture \ref{conj1} for $k=3$ follows from the
original conjecture of Erd\H os for $k=3$. For given $k>4$,
Conjecture \ref{conj1} can be verified by the method in Section
\ref{k=4}.\end{remark}

\begin{theorem}\label{generalthm} Suppose that $n_0,l_0$ are two positive
integers such that Conjecture \ref{conj2} is true for all $l\ge
l_0$, and
 $|A(n,k)|\ge |E(n,k)|$ implies that
$A(n,k)=E(n,k)$ for $n_0\le n\le p_1p_2\dots p_kl_0-p_{k+1}$. Then
$|A(n,k)|\ge |E(n,k)|$ implies that $A(n,k)=E(n,k)$ for all $n\ge
n_0$.\end{theorem}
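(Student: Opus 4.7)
The plan is to induct on $n\ge n_0$, with the interval $n_0\le n\le p_1p_2\cdots p_kl_0-p_{k+1}$ serving as the base case (handled by hypothesis). For the inductive step, fix $n>p_1p_2\cdots p_kl_0-p_{k+1}$ and write $n=Pl+a$, where $P=p_1p_2\cdots p_k$ and $-p_{k+1}+1\le a\le P-p_{k+1}$. A quick check shows $l\ge l_0$. Set $n_1=Pl-p_{k+1}$, so that $n_1\ge Pl_0-p_{k+1}\ge n_0$ and $n_1<n$. Given $A=A(n,k)$ with $|A|\ge |E(n,k)|$, decompose $A=A_1\sqcup A_2$ with $A_1=A\cap[1,n_1]$ and $A_2=A\cap(n_1,n]$, and write $A_2=\{Pl+b:b\in B\}$ for some $B\subseteq[-p_{k+1}+1,a]$.

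Next I would apply Conjecture~\ref{conj2} to $B$: since $A_2\subseteq A$ contains no $k+1$ pairwise coprime integers, neither does $\{Pl+b:b\in B\}$, whence $|B|\le|F_k\cap[-p_{k+1}+1,a]|$. Translation by $Pl$ gives $|F_k\cap(n_1,n]|=|F_k\cap[-p_{k+1}+1,a]|$ (because $p_i\mid Pl$ for $i\le k$), so that $|E(n,k)|=|E(n_1,k)|+|F_k\cap[-p_{k+1}+1,a]|$. Combining these,
\[
|A_1|=|A|-|A_2|\ge |E(n,k)|-|F_k\cap[-p_{k+1}+1,a]|=|E(n_1,k)|.
\]
Since $A_1\subseteq A$ also contains no $k+1$ pairwise coprime integers and $n_0\le n_1<n$, the inductive hypothesis forces $A_1=E(n_1,k)$.

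The last step is to upgrade $A_2$ from a size bound to equality as a set. Here the key observation is that $p_1,p_2,\ldots,p_k$ all lie in $E(n_1,k)=A_1$ (since $n_1\gg p_k$), and they are pairwise coprime. Hence any $x\in A_2$ with $\gcd(x,P)=1$ would combine with $p_1,\ldots,p_k$ to produce $k+1$ pairwise coprime elements of $A$, a contradiction. Thus $A_2\subseteq F_k\cap(n_1,n]$, and since $|A_2|\ge|E(n,k)|-|E(n_1,k)|=|F_k\cap(n_1,n]|$, equality of sets follows. Therefore $A=E(n_1,k)\cup(F_k\cap(n_1,n])=E(n,k)$, completing the induction.

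The only subtle point is the one just used: Conjecture~\ref{conj2} delivers only a cardinality inequality, not a structural description of $B$, so the uniqueness of $A_2$ must be extracted externally. The argument via the primes $p_1,\ldots,p_k\in A_1$ dispenses with this obstacle cleanly, which is why the inductive setup and the partition point $n_1=Pl-p_{k+1}$ are tuned so that Conjecture~\ref{conj2} fits the window $(n_1,n]$ precisely.
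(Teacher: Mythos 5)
Your proposal is correct and follows essentially the same route as the paper: induct on $n$, split at $n_1=p_1p_2\cdots p_k l-p_{k+1}$, apply Conjecture~\ref{conj2} to the top window to push the counting hypothesis down to $[1,n_1]$, invoke the induction hypothesis to get $A\cap[1,n_1]=E(n_1,k)$, and then use $p_1,\dots,p_k\in A$ to exclude every element coprime to $p_1\cdots p_k$, forcing $A\subseteq E(n,k)$ and hence equality. The only cosmetic difference is that you conclude set equality on the window $(n_1,n]$ separately, whereas the paper concludes $A(m,k)\subseteq E(m,k)$ globally and finishes with the cardinality bound; these are the same argument.
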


In the last section we pose several open problems and a conjecture
for further research.

\section{Proof of Theorem \ref{generalthm}}\label{secgeneralthm}

We use induction on $n$. By the condition we have that
 $|A(n,k)|\ge |E(n,k)|$ implies that
$A(n,k)=E(n,k)$ for $n_0\le n\le p_1p_2\dots p_kl_0-p_{k+1}$.
Suppose that  $|A(n,k)|\ge |E(n,k)|$ implies that $A(n,k)=E(n,k)$
for all $n_0\le n<m$ $(m>p_1p_2\dots p_kl_0-p_{k+1})$ and
Conjecture \ref{conj2} is true for all $l\ge l_0$. Then there
exist two integers $l, a$ with $-p_{k+1}+1\le a\le p_1p_2\dots
p_k-p_{k+1}$ such that
$$m=p_1p_2\dots p_kl+a,\quad  l\ge l_0.$$
Let
$$B_{k,l}(a)=\{ b:  p_1p_2\dots p_kl+b \in A(m,k),-p_{k+1}+1\le b\le a \}.$$
Then  one cannot select $k+1$ pairwise coprime integers from the set
$\{ p_1p_2\dots p_kl+b : b\in B_{k,l}(a)\}$. By Conjecture
\ref{conj2} we have \begin{eqnarray*}&&|A(m,k)\cap [p_1p_2\dots
p_kl-p_{k+1}+1, m]|\\
&=& |B_{k,l}(a)|\le |F_k\cap [-p_{k+1}+1, a]| \\
&=& |F_k \cap [p_1p_2\dots p_kl-p_{k+1}+1,m] |.\end{eqnarray*} If
$|A(m,k)|\ge |E(m,k)|=|F_k\cap [1, m]|$, then
$$ |A(m,k)\cap [1, p_1p_2\dots
p_kl-p_{k+1}]|\ge |F_k\cap [1, p_1p_2\dots p_kl-p_{k+1}]|.$$ By the
induction hypothesis we have
$$A(m,k)\cap [1, p_1p_2\dots
p_kl-p_{k+1}]=E(p_1p_2\dots p_kl-p_{k+1},k).$$ Thus, $p_1,p_2,\dots
, p_k\in A(m,k)$.  If $(i, p_1\cdots p_k)=1$, then $i\notin A(m,k)$.
Hence $A(m,k)\subseteq E(m,k)$. Since $|A(m,k)|\ge |E(m,k)|$, we
have $A(m,k)= E(m,k)$.
 This completes the proof of
Theorem \ref{generalthm}.

\section{Preliminary Lemmas for Conjecture \ref{conj2}}\label{seca}

Let $k\ge 3,l$ be two positive integers and let
$$T_{k}=\{ a : -p_{k+1}+1\le a\le p_1p_2\dots p_k-p_{k+1}, (a, p_1p_2\dots
p_k)=1\} .$$

For any integer $a$, let $B_{k,l}(a)$ be a subset of $[-p_{k+1}+1,
a]$ such that one cannot select $k+1$ pairwise coprime integers from
$\{ p_1p_2\dots p_kl+b : b\in B_{k,l}(a)\}$.

\begin{lemma}\label{lemma1}
Conjecture \ref{conj2} is true for $a\in \{ -1, 1\} $.
\end{lemma}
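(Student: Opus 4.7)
The plan is to handle the two cases $a=-1$ and $a=1$ separately by producing in each a short explicit subset of $[-p_{k+1}+1,a]$ whose translates under $b \mapsto M+b$, with $M:=p_1p_2\cdots p_k l$, are pairwise coprime; the required cardinality bound will then follow by a pigeonhole count, since $B_{k,l}(a)$ must omit enough members of that subset. As a preliminary I would observe that any integer $b$ with $2 \le |b| \le p_{k+1}-1$ has a prime factor strictly less than $p_{k+1}$, hence in $\{p_1,\ldots,p_k\}$, while $\pm 1 \notin F_k$ and $0 \in F_k$. This gives $|F_k \cap [-p_{k+1}+1,-1]| = p_{k+1}-2$ and $|F_k \cap [-p_{k+1}+1,1]| = p_{k+1}-1$.

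For $a=-1$ I would take $S := \{-1,-p_1,\ldots,-p_k\} \subseteq [-p_{k+1}+1,-1]$, of size $k+1$. A common prime divisor $q$ of any two of $M-1,M-p_1,\ldots,M-p_k$ divides a nonzero difference of the form $p_j-p_i$ or $p_i-1$, each at most $p_k-1$. The standard dichotomy handles $q$: if $q \le p_k$, then $q \in \{p_1,\ldots,p_k\}$ and $q\mid M$, so $q\mid M-1$ or $q\mid M-p_i$ (with $q\ne p_i$) is impossible; if $q > p_k$, then $q$ exceeds the difference it must divide. Hence the $k+1$ translates are pairwise coprime, so $S \not\subseteq B_{k,l}(-1)$, and $B_{k,l}(-1)$ omits at least one element of the $(p_{k+1}-1)$-element interval, giving $|B_{k,l}(-1)| \le p_{k+1}-2$.

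For $a=1$ the analogous test set is $T := \{-1,1,-p_1,\ldots,-p_k\} \subseteq [-p_{k+1}+1,1]$, of size $k+2$. The same gcd argument extends once two new pairs are verified: $\gcd(M-1,M+1) \mid 2$ with both numbers odd (since $2\mid M$), and $\gcd(M+1,M-p_i) \mid p_i+1 \le p_k+1 < p_{k+1}$, the last inequality using $k\ge 3$, so that $p_k+1$ is even and composite. Thus all $k+2$ translates are pairwise coprime. Now if $|B_{k,l}(1)| \ge p_{k+1}$, then its complement in the $(p_{k+1}+1)$-element interval has at most one element, so $B_{k,l}(1)$ contains at least $k+1$ members of $T$, producing $k+1$ pairwise coprime integers in $\{M+b : b \in B_{k,l}(1)\}$, a contradiction; hence $|B_{k,l}(1)| \le p_{k+1}-1$.

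The argument is essentially a packaged gcd computation, so no substantive obstacle is expected. The most delicate step is the bound $p_k+1 < p_{k+1}$ used in the final gcd, which genuinely relies on $k \ge 3$ (it fails for $k=1$, where $p_1+1 = 3 = p_2$); everything else is a uniform application of the dichotomy $q \le p_k$ vs.\ $q > p_k$.
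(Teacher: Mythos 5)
Your proof is correct and takes essentially the same route as the paper: both arguments exhibit the test set $\{-p_k,\dots,-p_1,-1,1\}$ (you merely split it into the two cases $a=-1$ and $a=1$), verify that its translates by $p_1p_2\cdots p_kl$ are pairwise coprime, and conclude by the same count, using that every element of $[-p_{k+1}+1,a]$ other than $\pm 1$ already lies in $F_k$. The only difference is that the paper asserts the pairwise coprimality without proof, whereas you carry out the gcd verification explicitly.
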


\begin{proof} Let $a\in \{ -1, 1\} $. If $b\in [-p_{k+1}+1, a]$ and $b\notin \{ -1, 1\}
$, then $b\in F_k\cap [-p_{k+1}+1, a]$. Since the integers in the
set
$$\{ p_1p_2\dots p_kl+b : b=-p_k, -p_{k-1}, \dots , -p_1, -1, 1\}$$
are coprime each other, we have
$$|B_{k,l}(a)\cap \{ -p_k, -p_{k-1}, \dots , -p_1, -1, 1\} |\le k=|\{ -p_k, -p_{k-1}, \dots ,
-p_1\} |.$$ Therefore $$ |B_{k,l}(a)|\le |F_k\cap [-p_{k+1}+1,
a]|.$$ This completes the proof of Lemma \ref{lemma1}.
 \end{proof}

\begin{lemma}\label{lemma3} Suppose that $-p_{k+1}+1 < b \le a\le p_1p_2\dots p_k-p_{k+1}$,
 Conjecture \ref{conj2} is true for $b-1$ and $|B_{k,l}(a)\cap [b, a]|\le |F_k\cap [b, a]|$. Then $$|B_{k,l}(a)|\le  |F_k\cap [-p_{k+1}+1,
a]|.$$ In particular, if Conjecture \ref{conj2} is true for $a-1$
and $a\notin B_{k,l}(a)\cup T_k$, then
$$|B_{k,l}(a)|\le  |F_k\cap [-p_{k+1}+1,
a]|.$$
\end{lemma}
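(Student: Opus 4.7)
The plan is to partition the interval $[-p_{k+1}+1, a]$ at $b$, bound the contribution of $B_{k,l}(a)$ to each of the two sub-intervals, and add. Concretely, I would decompose
$$B_{k,l}(a) = \bigl(B_{k,l}(a)\cap[-p_{k+1}+1, b-1]\bigr)\cup\bigl(B_{k,l}(a)\cap[b,a]\bigr).$$
For the right-hand piece the bound $|B_{k,l}(a)\cap[b,a]|\le |F_k\cap[b,a]|$ is supplied by hypothesis, so the task reduces to bounding the left-hand piece by $|F_k\cap[-p_{k+1}+1, b-1]|$. Once both bounds are in hand, adding them and using the disjoint additivity
$$|F_k\cap[-p_{k+1}+1, b-1]|+|F_k\cap[b,a]|=|F_k\cap[-p_{k+1}+1, a]|$$
produces the claimed inequality.

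The one observation that makes the left-hand piece tractable is that $B_{k,l}(a)\cap[-p_{k+1}+1, b-1]$ is itself an admissible choice of ``$B_{k,l}(b-1)$'' in the sense of Conjecture \ref{conj2}: it is contained in $[-p_{k+1}+1, b-1]$, and since it is a subset of $B_{k,l}(a)$, the corresponding translated integers $\{p_1p_2\cdots p_kl+b' : b'\in B_{k,l}(a)\cap[-p_{k+1}+1, b-1]\}$ form a subset of $\{p_1p_2\cdots p_kl+b' : b'\in B_{k,l}(a)\}$ and therefore still avoid containing $k+1$ pairwise coprime integers. The assumed validity of Conjecture \ref{conj2} at $b-1$ then delivers exactly the bound needed.

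For the ``in particular'' clause I would simply specialize the main statement to $b=a$; this is legitimate because invoking Conjecture \ref{conj2} at $a-1$ implicitly requires $a-1\ge -p_{k+1}+1$, hence $a>-p_{k+1}+1$. The only residual hypothesis of the general statement is then $|B_{k,l}(a)\cap\{a\}|\le|F_k\cap\{a\}|$, and this is immediate from $a\notin B_{k,l}(a)$ (a direct consequence of $a\notin B_{k,l}(a)\cup T_k$), since the left-hand side is then $0$. I do not expect any serious obstacle: the whole argument is essentially bookkeeping, and the only point requiring explicit notice is the monotonicity remark that any subset of a set avoiding $k+1$ pairwise coprime elements still avoids such a configuration, which is what permits the restriction-to-$[-p_{k+1}+1, b-1]$ step to invoke the inductive hypothesis.
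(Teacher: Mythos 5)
Your argument for the main claim is correct and is exactly the paper's: split $[-p_{k+1}+1,a]$ at $b$, apply Conjecture \ref{conj2} at $b-1$ to the restriction $B_{k,l}(a)\cap[-p_{k+1}+1,b-1]$ (which is an admissible $B_{k,l}(b-1)$, since a subset of a set whose translates contain no $k+1$ pairwise coprime integers still has that property), use the hypothesis on $[b,a]$, and add over the disjoint pieces.

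One caveat concerns the ``in particular'' clause. You read the hypothesis $a\notin B_{k,l}(a)\cup T_k$ literally and therefore justify $|B_{k,l}(a)\cap[a,a]|\le|F_k\cap[a,a]|$ solely from the branch $a\notin B_{k,l}(a)$, where the left-hand side is $0$. The paper's own proof treats the hypothesis as the disjunction ``either $a\notin B_{k,l}(a)$ or $a\in F_k$'' (i.e., as $a\notin B_{k,l}(a)\cap T_k$), which is the form actually needed later: the inductive arguments for $k=3,4$ use this lemma to reduce to the case $a\in T_k\cap B_{k,l}(a)$, and that reduction requires the disjunctive version. Under that reading your argument omits the branch $a\in F_k$ with possibly $a\in B_{k,l}(a)$, where the left-hand side can be $1$; the bound still holds there because $|F_k\cap[a,a]|=1$, but that is a case your proof does not address. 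It is a one-line addition, not a structural flaw, yet it is the half of the case analysis that the paper's subsequent sections actually rely on.
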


\begin{proof} By the assumption we have
$$|B_{k,l}(a)\cap [-p_{k+1}+1, b-1]|\le  |F_k\cap [-p_{k+1}+1,
b-1]|.$$ Since $|B_{k,l}(a)\cap [b, a]|\le |F_k\cap [b, a]|$, we
have
$$|B_{k,l}(a)|\le  |F_k\cap [-p_{k+1}+1,
a]|.$$ If $a\notin B_{k,l}(a)\cup T_k$, then either $a\notin
B_{k,l}(a)$ or $a\in F_k$. Thus $|B_{k,l}(a)\cap [a, a]|\le
|F_k\cap [a, a]|$. Hence
$$|B_{k,l}(a)|\le  |F_k\cap [-p_{k+1}+1,
a]|.$$ This completes the proof of Lemma \ref{lemma3}.
\end{proof}

\begin{definition} Let $l$ be an integer and $a_1<a_2<\cdots
< a_t$ be integers.  $\{ a_1, a_2, \ldots , a_t\} $ is called a
$l$-good set if $(p_1p_2\dots p_kl+a_i, a_i-a_j)=1(1\le i<j\le
t)$.
\end{definition}

$\{ a_1, a_2, \ldots , a_t\} $ is  called \emph{ a good set } if
$(a_i, a_j, p_1p_2\dots p_k)=1$ for all $1\le i<j\le t$ and
$a_i-a_j$ has no prime factors more than $p_k$ for any $i\not= j$.
It is clear that a good set $\{ a_1, a_2, \ldots , a_t\} $ is also
a $l$-good set for any integer $l$.

\begin{lemma}\label{goodset} If $\{ a_1, a_2, \ldots , a_t\} $ is  a $l$-good
set,  then $p_1p_2\dots p_kl+a_1, p_1p_2\dots p_kl+a_2, \ldots ,
p_1p_2\dots p_kl+a_t$ are pairwise coprime.

In particular, if $\{ a_1, a_2, \ldots , a_t\} $ is a $l$-good
set, then $$|B_{k,l}(a)\cap \{ a_1, a_2, \ldots , a_t\} |\le k.$$
\end{lemma}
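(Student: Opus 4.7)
The plan is to prove the first statement by the elementary observation that any common prime divisor of two numbers of the form $p_1p_2\cdots p_kl + a_i$ and $p_1p_2\cdots p_kl + a_j$ must also divide their difference $a_j - a_i$, and then to deduce the ``in particular'' clause directly from the definition of $B_{k,l}(a)$.

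For the first assertion, I would fix indices $1 \le i < j \le t$ and let $q$ be any prime dividing both $p_1p_2\cdots p_kl + a_i$ and $p_1p_2\cdots p_kl + a_j$. Subtracting, $q$ divides $a_j - a_i$, so $q$ divides $\gcd(p_1p_2\cdots p_kl + a_i,\, a_i - a_j)$. By the definition of an $l$-good set this gcd is $1$, contradicting the existence of $q$. Hence $p_1p_2\cdots p_kl + a_i$ and $p_1p_2\cdots p_kl + a_j$ share no common prime factor, and since $i,j$ were arbitrary the whole list is pairwise coprime.

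For the second assertion, I would argue by contradiction: if $|B_{k,l}(a) \cap \{a_1,\ldots,a_t\}| \ge k+1$, then selecting $k+1$ of the $a_i$ lying in $B_{k,l}(a)$ yields a subfamily which is itself $l$-good (the defining condition is inherited under passing to subsets). By the first part, the corresponding $k+1$ integers $p_1p_2\cdots p_kl + a_i$ are pairwise coprime, contradicting the property that from $\{p_1p_2\cdots p_kl + b : b \in B_{k,l}(a)\}$ one cannot select $k+1$ pairwise coprime integers.

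I do not foresee any real obstacle: the lemma is a direct unpacking of the two definitions, and the argument should fit in a few lines. The only point worth making explicit is the hereditary nature of the $l$-good property under taking subfamilies, which is what makes the reduction from the general statement to the ``in particular'' clause immediate.
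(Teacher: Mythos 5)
Your proof is correct and is exactly the argument the paper has in mind; the paper simply states that the lemma ``follows from the definition of $l$-good set immediately'' and omits the details, which you have correctly supplied (a common prime of $p_1p_2\cdots p_kl+a_i$ and $p_1p_2\cdots p_kl+a_j$ divides their difference, contradicting $(p_1p_2\cdots p_kl+a_i,\,a_i-a_j)=1$, and the bound $k$ then follows since $B_{k,l}(a)$ admits no $k+1$ pairwise coprime translates).
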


 The proof follows from the definition of l-good set immediately.

\begin{lemma}\label{lemma2} Conjecture \ref{conj2} is true for $a\in \{ p_{k+1}, p_{k+2}\} $.
\end{lemma}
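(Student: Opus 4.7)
The plan is to prove Conjecture~\ref{conj2} for $a=p_{k+1}$ and $a=p_{k+2}$ by cascading from Lemma~\ref{lemma1} via Lemma~\ref{lemma3} and then invoking two explicit good-set constructions. First I will observe that every integer in $[2,p_{k+1}-1]$ lies in $F_k$, because any such integer coprime to $p_1\cdots p_k$ would need all prime factors at least $p_{k+1}$, which is impossible; similarly every integer in $(p_{k+1},p_{k+2})$ lies in $F_k$, since no prime lies strictly between $p_{k+1}$ and $p_{k+2}$ and $p_{k+1}^2>p_{k+2}$. Repeated application of the ``in particular'' clause of Lemma~\ref{lemma3} (whose proof shows that the operative hypothesis is $a\notin B_{k,l}(a)\cap T_k$) then propagates Conjecture~\ref{conj2} from $a=1$ up to $a=p_{k+1}-1$, and later from $a=p_{k+1}$ up to $a=p_{k+2}-1$, so only $a=p_{k+1}$ and $a=p_{k+2}$ themselves require genuinely new input.

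For $a=p_{k+1}$ I will verify that
\[
S_1:=\{p_1,p_2,\ldots,p_k,-1,1,p_{k+1}\}
\]
of size $k+3$ is a good set. Its elements are pairwise coprime, so the first condition is immediate. The differences fall into three families: $p_j-p_i$ for $i<j\le k$ is strictly less than $p_k$; each $p_i\pm 1$ with $p_i$ odd factors as $2\cdot(p_i\pm 1)/2$ with the quotient at most $p_k$; and each of $p_{k+1}-p_i$ and $p_{k+1}\pm 1$ is either less than $p_k$ or composite (no prime lies strictly between $p_k$ and $p_{k+1}$), with every prime factor controlled by Bertrand's bound $p_{k+1}<2p_k$. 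Lemma~\ref{goodset} then gives $|B_{k,l}(p_{k+1})\cap S_1|\le k$, whence
\[
|B_{k,l}(p_{k+1})|\le k+\bigl(2p_{k+1}-(k+3)\bigr)=2p_{k+1}-3=|F_k\cap[-p_{k+1}+1,p_{k+1}]|.
\]

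For $a=p_{k+2}$ I would first cascade through $[p_{k+1}+1,p_{k+2}-1]$ as above, and then split on whether $p_{k+2}\in B_{k,l}(p_{k+2})$. If not, one last application of Lemma~\ref{lemma3} concludes. Otherwise I would construct a good set $S_2$ of size $k+4$ containing $\{-1,1,p_{k+1},p_{k+2}\}$ together with one representative from each of the $k$ prime classes. The natural candidate $\{p_1,\ldots,p_k,-1,1,p_{k+1},p_{k+2}\}$ is a good set unless some $p_{k+2}-p_i$ equals $p_{k+1}$; this can only happen when $p_{k+1},p_{k+2}$ are twin primes and $p_i=p_1=2$, in which case I would replace the representative $2$ by $4$ and re-verify, noting that $p_{k+1}-4$ and $p_{k+2}-4$ are composite with prime factors at most $p_k$ and that the differences $4-p_j$ and $4\pm 1$ remain controlled.

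The step I expect to be the main obstacle is the uniform construction of $S_2$, because the twin-primes obstruction recurs for infinitely many $k$ and the substituted representative must behave well simultaneously against all other elements of $S_2$. A cleaner alternative that sidesteps a monolithic $(k+4)$-set is to apply the just-proved Conjecture~\ref{conj2} for $a=p_{k+1}$ to $B_{k,l}(p_{k+2})\cap[-p_{k+1}+1,p_{k+1}]$, which is itself a valid $B_{k,l}(p_{k+1})$, and then bound $|B_{k,l}(p_{k+2})\cap[p_{k+1}+1,p_{k+2}]|\le p_{k+2}-p_{k+1}-1$ separately by producing an $l$-good $(k+1)$-set meeting the short top block; summing the two bounds yields exactly $|F_k\cap[-p_{k+1}+1,p_{k+2}]|$.
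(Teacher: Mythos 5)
Your main route is correct and its decisive construction is the same as the paper's: a good set made of one representative from each of the $k$ prime classes together with $p_{k+1}$ (and, for the second case, $p_{k+2}$), with the representative $2$ replaced by $4$ exactly when $p_{k+2}-2$ is prime (forcing the twin-prime configuration $p_{k+2}-2=p_{k+1}$), and Bertrand's postulate controlling the pairwise differences. The packaging differs: you enlarge the good sets by $\{-1,1\}$ so that they absorb all of $T_k\cap[-p_{k+1}+1,a]$ and you finish with one count over the whole interval, whereas the paper keeps $\pm1$ out of its good sets, bounds only $|B_{k,l}(a)\cap[p_1,a]|$ (resp.\ $[p_2,a]$) and glues in the initial segment via Lemma \ref{lemma3} and the case $a=1$ from Lemma \ref{lemma1}. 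Your variant works, and your reading of the operative hypothesis of Lemma \ref{lemma3} as $a\notin B_{k,l}(a)\cap T_k$ is the correct one; but putting $-1$ into the set costs you one verification the paper never faces: you must check that $p_{k+2}+1$ has no prime factor exceeding $p_k$, which fails precisely when $p_{k+2}=2p_{k+1}-1$, and excluding that for $k\ge3$ needs the form of Bertrand's postulate giving a prime in $(n,2n-2)$, slightly more than the bound $p_{k+2}<2p_{k+1}$ you cite. Finally, your ``cleaner alternative'' for $a=p_{k+2}$ should be discarded: in the twin-prime case the top block $[p_{k+1}+1,p_{k+2}]$ has only two elements, so no $(k+1)$-element $l$-good set can sit inside it, and if the good set reaches below $p_{k+1}$ the element it excludes may lie in the bottom block, which is already counted tightly, so the two bounds do not add up. The monolithic $(k+4)$-element set you feared is in fact the easy and uniform part of the argument.
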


\begin{proof}For a
positive integer $n$ we use $P(n)$ to denote the largest prime
factor of $n$. By the definition, $\{ p_1, p_2, \dots , p_{k+1}\}
$ and $\{ p_1^2, p_2, \dots , p_{k+1}\} $  are both good sets. By
the Bertrand's postulate we have $p_{k+2}<2p_{k+1}$. For $2\le
i\le k+1$ we have $P(p_{k+2}-p_i)<\frac 12 p_{k+2}<p_{k+1}$.

{\bf Case 1:} $p_{k+2}-2$ is composite. We have $P(p_{k+2}-p_1)<
\frac 13 p_{k+2}<p_{k+1}$. So $\{ p_1, p_2, \dots ,
p_{k+1},p_{k+2}\} $ is a good set. By Lemma \ref{goodset} we have
$$|B_{k,l}(a)\cap \{ p_1, p_2, \dots ,
p_{k+1},p_{k+2}\} |\le k=|\{ p_1, p_2, \dots ,p_k\}|.$$ If $p_1\le
b<a$ with $b\notin \{ p_1, p_2, \dots , p_{k+1},p_{k+2}\}$, then
$$b\in F_k\setminus \{ p_1, p_2, \dots ,p_k\}.$$ Hence
$$ |B_{k,l}(a)\cap [p_1, a]|\le |F_k\cap [p_1, a]|.$$

{\bf Case 2:} $p_{k+2}-2$ is a prime. Then $p_{k+2}-2^2$ is
composite. $P(p_{k+2}-p_1^2)< \frac 13 p_{k+2}<p_{k+1}$. So $\{
p_1^2, p_2, \dots , p_{k+1},p_{k+2}\} $ is a good set. Similar to
Case 1 we have
$$ |B_{k,l}(a)\cap [p_2, a]|\le |F_k\cap [p_2, a]|.$$

By Lemma \ref{lemma3}, Conjecture \ref{conj2} is true for $a\in \{
p_{k+1}, p_{k+2}\} $. This completes the proof of Lemma
\ref{lemma2}.
 \end{proof}


\begin{lemma}\label{4-set} Suppose that $a\in T_k\cap B_{k,l}(a)$. Let  $-p_{k+1}+1\le b<a$, $U_1,\ldots , U_r$
be pairwise disjoint subsets of $F_k\cap [b, a]$ with
$|U_i|=k-1,k$ $(1\le i\le r)$, and $S_1, S_2, \dots , S_r$ be
subsets of $T_k$ with
\begin{equation}\label{U-S0}B_{k,l}(a)\cap T_k\cap  [b,a]\subseteq  S_1\cup S_2\cup \cdots \cup S_r.\end{equation} Suppose that

(i) if $|U_i|=k$, then  $ U_i\cup S_i$ is a $l$-good set;

(ii) if $|U_i|=k-1$, then $a\notin S_i$ and  $ U_i\cup S_i \cup \{
a \}$ is a $l$-good set.

 Then $|B_{k,l}(a)\cap [b, a]|\le |F_k\cap [b,
a]|$.\end{lemma}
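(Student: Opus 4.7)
The plan is to work within the dichotomy $[b,a] = (F_k \cap [b,a]) \sqcup (T_k \cap [b,a])$, which is valid because $[b,a] \subseteq [-p_{k+1}+1,\, p_1\cdots p_k - p_{k+1}]$ so each integer there is either divisible by some $p_i$ (in $F_k$) or coprime to $p_1\cdots p_k$ (in $T_k$). Under this decomposition the desired bound $|B_{k,l}(a) \cap [b,a]| \le |F_k \cap [b,a]|$ is equivalent to
$$|B_{k,l}(a) \cap T_k \cap [b,a]| \le |(F_k \cap [b,a]) \setminus B_{k,l}(a)|,$$
so the task becomes to show that the ``bad'' coprime elements counted on the left are at most the number of $F_k$-elements in $[b,a]$ that $B_{k,l}(a)$ has missed. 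The $U_i$'s will supply those missed elements.

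The local step, carried out for each $i$ separately, is the inequality
$$|B_{k,l}(a) \cap S_i| \le |U_i \setminus B_{k,l}(a)|.$$
In case (i), the sets $U_i \subseteq F_k$ and $S_i \subseteq T_k$ are disjoint (since $F_k \cap T_k = \emptyset$) and $U_i \cup S_i$ is $l$-good, so Lemma \ref{goodset} yields $|B_{k,l}(a) \cap U_i| + |B_{k,l}(a) \cap S_i| \le k = |U_i|$, and rearranging gives the claim. In case (ii) the $l$-good set is $U_i \cup S_i \cup \{a\}$; the hypotheses $a \in B_{k,l}(a)$, $a \notin S_i$, and $a \notin U_i$ (the last from $a \in T_k$ and $U_i \subseteq F_k$) force $a$ to occupy one of the at most $k$ slots allowed by Lemma \ref{goodset}, so $|B_{k,l}(a) \cap U_i| + |B_{k,l}(a) \cap S_i| \le k - 1 = |U_i|$, and the same rearrangement delivers the claim.

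The global step sums the local bounds. Condition \eqref{U-S0} and subadditivity give
$$|B_{k,l}(a) \cap T_k \cap [b,a]| \le \sum_{i=1}^r |B_{k,l}(a) \cap S_i| \le \sum_{i=1}^r |U_i \setminus B_{k,l}(a)|,$$
and pairwise disjointness of the $U_i$'s (each a subset of $F_k \cap [b,a]$) collapses the right-hand sum to $\bigl|\bigl(\bigcup_i U_i\bigr) \setminus B_{k,l}(a)\bigr| \le |(F_k \cap [b,a]) \setminus B_{k,l}(a)|$, which closes the argument. The only delicate point is case (ii): the ``missing'' $k$-th element of $U_i$ is paid for by the external element $a$, and this accounting is legitimate only because $a$ lies both in the $l$-good set under consideration \emph{and} in $B_{k,l}(a)$, while $a \notin S_i$ ensures no double counting of $a$ across different indices.
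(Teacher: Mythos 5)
Your proof is correct and follows essentially the same route as the paper's: both rest on Lemma \ref{goodset} applied to $U_i\cup S_i$ (or to $U_i\cup S_i\cup\{a\}$, with $a\in B_{k,l}(a)\setminus(U_i\cup S_i)$ absorbing one slot) to get the per-index bound $|B_{k,l}(a)\cap(U_i\cup S_i)|\le |U_i|$, then sum over $i$ using \eqref{U-S0}, the disjointness of the $U_i$, and the partition of $[b,a]$ into its $F_k$ and $T_k$ parts. Your rearrangement of the target into $|B_{k,l}(a)\cap T_k\cap[b,a]|\le |(F_k\cap[b,a])\setminus B_{k,l}(a)|$ is only a cosmetic reorganization of the paper's three-way decomposition.
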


\begin{proof}
 By Lemma \ref{goodset}
we have
$$|B_{k,l}(a)\cap (U_i\cup S_i)|\le k=|U_i|,\quad \text{ if }
|U_i|=k,$$
$$|B_{k,l}(a)\cap (U_i\cup S_i)|=|B_{k,l}(a)\cap (U_i\cup S_i\cup \{ a\})|-1\le k-1=|U_i|,\quad \text{ if }
|U_i|=k-1.$$ Hence
\begin{equation}\label{U-S1} \sum_{i=1}^r |B_{k,l}(a)\cap (U_i\cup S_i)|\le
\sum_{i=1}^r|U_i|=|U_1\cup U_2\cup\cdots\cup U_r|.\end{equation} By
(\ref{U-S0}) we have
\begin{equation}\label{U-S2}B_{k,l}(a)\cap T_k\cap  [b,a]=B_{k,l}(a)\cap [b, a]\cap (S_1\cup  \cdots
\cup S_r).\end{equation} By (\ref{U-S2}) we have
\begin{eqnarray*}&&|B_{k,l}(a)\cap [b, a]|\\
&=&|B_{k,l}(a)\cap T_k\cap  [b,a]|+|B_{k,l}(a) \cap [b, a]\cap (U_1\cup \cdots \cup
U_r)|\\
&& +|B_{k,l}(a)\cap [b, a]\cap (F_k\setminus (U_1\cup \cdots \cup U_r))|\\
&= & |B_{k,l}(a)\cap [b, a]\cap (S_1\cup  \cdots \cup
S_r)|+|B_{k,l}(a) \cap [b,
a]\cap (U_1\cup \cdots \cup U_r)|\\
&&+|B_{k,l}(a)\cap [b, a]\cap (F_k\setminus (U_1\cup \cdots \cup U_r))|\\
&=& |B_{k,l}(a)\cap [b, a] \cap (U_1\cup S_1\cup\cdots\cup U_r\cup S_r)|\\
&&+|B_{k,l}(a)\cap [b, a]\cap (F_k\setminus (U_1\cup \cdots \cup U_r))|\\
&\le & |U_1\cup U_2\cup\cdots\cup U_r|+|[b, a]\cap (F_k\setminus
(U_1\cup \cdots \cup U_r))|\\
&=&|F_k\cap [b, a]|.\end{eqnarray*} This completes the proof of
Lemma \ref{4-set}.
\end{proof}

{\bf Remark.} In the application, we need only to give $U_i\cup
S_i$ $(1\le i\le r)$. It is natural to take $b$ being the least
integer in the set $U_1\cup S_1\cup\cdots\cup U_r\cup S_r$.

\section{Conjecture \ref{conj2} for $k=3$}

In this section we prove that Conjecture \ref{conj2} for $k=3$ is
true. We have $T_3=\{-1,1,7,11,13, 17, 19, 23\}$. We use induction
on $a\in [-6, 23]$.

It is clear that $|B_{3,l}(-6)|\le 1=|F_3\cap [-6, -6]|$. Suppose
that  $a\in [-5, 23]$ and  $|B_{3,l}(a')|\le |F_3\cap [-6, a']|$
for all $a'\in [-6, a-1]$ and all $B_{3,l}(a')$. By Lemma
\ref{lemma3} we may assume that $a\in T_3\cap B_{3,l}(a)$.
 By Lemmas \ref{lemma1} and \ref{lemma3}, Conjecture \ref{conj2}  is true for
$k=3$ and $a\in \{-1,1,7,11\}$.

By Lemma \ref{lemma2} and the remark of Lemma \ref{4-set}, it is
enough to give $U_i\cup S_i$ $(1\le i\le r)$ which satisfy Lemma
\ref{4-set} for $a\in \{ 13, 17, 19, 23\}$.

{\bf Case 1:}  $a= 13, 17,19$. Let
$$U_1\cup S_1= \{8, 9,5, 7, 11, 13, 17\},$$
$$U_2\cup S_2=\{ 4, 3, -5, 1, -1,19\}. $$

{\bf Case 2:}  $a=23$. Let
$$U_1\cup S_1= \{8, 3,5,  11, 23\},$$
$$U_2\cup S_2=\{ -2, -3, -5, -1, 1,7\}, $$
$$U_3\cup S_3=\{ 21, 22, 13, 17, 19\}.$$

This completes the proof of Conjecture \ref{conj2} for $k=3$.

\section{Proof of Theorem \ref{mainthm1}}

Let
$$A(54, 3)=(F_3\setminus \{ 5, 25\} ) \cup \{ 7, 49\} .$$
Then $A(54, 3)$ does not contain $4$ pairwise coprime integers,
$|A(54, 3)|=|F_3\cap [1, 54]|$ and $A(54, 3)\not=F_3\cap [1, 54]$.

We will prove that for $55\le n\le 83=30\times 3-7$, if $|A(n,
3)|\ge |F_3\cap [1, n]|$, then $A(n, 3)=F_3\cap [1, n]$.

 Let $H_1$ be the set of all primes together with $1$. Let
$$H_2=\{ 2^2, 3^2, 5^2, 77\} , $$
$$H_3=\{ 2^3, 3^3, 55,7^2\} ,$$
$$H_4=F_3\setminus \{ 2, 3, 5, 2^2, 3^2, 5^2, 2^3, 3^3, 55\} .$$
For $55\le n\le 83$, we have
$$|A(n,3)\cap H_1|\le 3,$$
$$|A(n,3)\cap H_2|\le 3,$$
$$|A(n,3)\cap H_3|\le 3,$$
$$|A(n,3)\cap H_4|\le |F_3\cap [1,n]|-9,$$
$$[1,n]\subseteq H_1\cup H_2\cup H_3\cup H_4.$$
Hence
$$|A(n,3)|\le \sum_{i=1}^4 |A(n,3)\cap H_i|\le  |F_3\cap [1,n]|.$$
Since $|A(n, 3)|\ge |F_3\cap [1, n]|$, we have $|A(n, 3)|=
|F_3\cap [1, n]|$, $|A(n,3)\cap H_i|=3 (1\le i\le 3)$ and
$|A(n,3)\cap H_4|= |F_3\cap [1,n]|-9$. By $|A(n,3)\cap H_4|=
|F_3\cap [1,n]|-9$ we have $2^4, 51\in A(n,3)$. Since
$2^4,51,7,11$ are coprime each other,we have either $7\notin
A(n,3)$ or $11\notin A(n,3)$. By $|A(n,3)\cap H_2|=3$ we have
$A(n, 3)\cap H_1\subseteq \{ 2, 3, 5, 7, 11\} $.

If $7\in A(n,3)$ or $11\in A(n,3)$, then by $2^4,3\times 17\in
A(n,3)$ we have $5^2, 5\times 13\notin A(n,3)$. By $|A(n,3)\cap
H_2|= 3$ we have $A(n,3)\cap H_2=\{ 2^2, 3^2, 77\}$. So $n\ge 77$.
By $|A(n,3)\cap H_4|= |F_3\cap [1,n]|-9$ we have $5\times 13\in
A(n,3)$, a contradiction. Hence $7, 11\notin A(n,3)$. By
$|A(n,3)\cap H_1|= 3$ we have $A(n,3)\cap H_1=\{ 2, 3, 5\}$. Thus,
if $(m, 30)=1$, then $m\notin A(n,3)$. This implies that
$A(n,3)\subseteq F_3$. By $|A(n, 3)|\ge |F_3\cap [1, n]|$, we have
$A(n, 3)=F_3\cap [1, n]$.

Now Theorem \ref{mainthm1} follows from Theorem \ref{generalthm} and
Conjecture \ref{conj2}.

\section{Conjecture \ref{conj2} for $k=4$}

In this section we prove that Conjecture \ref{conj2} for $k=4$ is
true. We have
\begin{eqnarray*}T_4&=&\{ m : -10\le m\le 199, (m, 210)=1\} \\
&=&\{-1, 1, 11, 13, 17, 19, 23, 29, 31, 37, 41, 43, 47, 53, 59, 61,
67, \
71, 73, 79,\\
&&83, 89, 97, 101, 103, 107, 109, 113, 121, 127, 131, 137, \ 139,
143, 149, 151, \\ &&157, 163, 167, 169, 173, 179, 181, 187, 191,
193, \ 197, 199\}.\end{eqnarray*} We use induction on $a\in [-10,
199]$.

It is clear that $|B_{4,l}(-10)|\le 1=|F_4\cap [-10, -10]|$.
Suppose that  $a\in [-9, 199]$ and  $|B_{4,l}(a')|\le |F_4\cap
[-10, a']|$ for all $a'\in [-10, a-1]$ and all $B_{4,l}(a')$. By
Lemma \ref{lemma3} we may assume that  $a\in T_4\cap B_{4,l}(a)$.
 By Lemmas \ref{lemma1} and \ref{lemma3}, Conjecture \ref{conj2}  is true for
$k=4$ and $a\in \{-1,1,11,13\}$.

By Lemma \ref{lemma2} and the remark of Lemma \ref{4-set}, it is
enough to give $U_i\cup S_i$ $(1\le i\le r)$ which satisfy Lemma
\ref{4-set} for $a\in T_4$ with $a\ge 17$.

\noindent {\bf Case 1:} $a= 17, 19, 23, 29, 31$. Let
$$U_1\cup S_1=\{ -7, 5, 9, 8 \} \cup \{  -1,11,13,17,23,29 \},$$
$$U_2\cup S_2=\{7, -5, 3, 4 \} \cup \{  1, 19,31\} .$$

\noindent {\bf Case 2:} $a=37, 41$. Let
$$U_1\cup S_1=\{ -7,5,-9,-4 \} \cup \{-1,1,11,41   \},$$
$$U_2\cup S_2=\{7,-5,9,16 \} \cup \{13,19,23,37  \} ,$$
$$U_3\cup S_3=\{25,27,32 \} \cup \{ 17,29,31 \} .$$
\noindent {\bf Case 3:}  $a=43, 47  $. Let
$$U_1\cup S_1=\{-7,5,-3,2  \} \cup \{-1,11,17,47  \},$$
$$U_2\cup S_2=\{7,-5,3,-2 \} \cup \{ 1,13,19,43 \} ,$$
$$U_3\cup S_3=\{35,33,38 \} \cup \{23,29,31,37,41  \} .$$

\noindent {\bf Case 4:} $a=53$. Let
$$U_1\cup S_1=\{ 49, 25, 39, 46\} \cup \{ 37,41,43,53 \} ,$$
$$U_2\cup S_2=\{ 35, 33, 38\} \cup \{ 17,23,29,47 \} ,$$
$$U_3\cup S_3=\{ 7,-5,3,4\} \cup \{ -1,1,11,13,19,31 \} .$$

\noindent {\bf Case 5:} $a=59, 61, 67$. Let
$$U_1\cup S_1=\{ 49, 55, 51, 58 \} \cup \{  53,59,61,67\} .$$

\noindent {\bf Case 6:} $a=71$.

If $11\nmid 210l+71$, then
$$U_1\cup S_1=\{ 49, 55, 51,46 \} \cup \{ 47,53,61,67,71 \} ,$$
$$U_2\cup S_2=\{ 65,57,64\} \cup \{ 59 \} .$$

If $11\mid 210l+71$, then
$$U_1\cup S_1=\{ 7, 55, 57,22\} \cup \{ 43,47,67,71 \} ,$$
$$U_2\cup S_2=\{ 65,69,68\} \cup \{ 41,53,59,61\} ,$$
$$U_3\cup S_3=\{ 49, 25, 39, 34\} \cup \{ 19,29,31,37 \} ,$$
$$U_4\cup S_4=\{ -7,5,8,3\} \cup \{-1,1,11,13,17,23 \} .$$

\noindent {\bf Case 7:} $a=73$. Let
$$U_1\cup S_1= \{49,55,57,64  \} \cup \{59,61,67,73 \} ,$$
$$U_2\cup S_2= \{65,69,68  \} \cup \{53,71 \} .$$

\noindent {\bf Case 8:} $a=79, 83, 89$. Let
$$U_1\cup S_1=\{ 49, 55, 51, 58 \} \cup \{  53,59,61,67\} ,$$
$$U_2\cup S_2=\{ 77, 65, 69, 74 \} \cup \{  71,73,79,83,89\} .$$

\noindent {\bf Case 9:} $a= 97$.
 Let
$$U_1\cup S_1=\{ 91,95,93,94\} \cup \{ 97\} .$$

\noindent {\bf Case 10:} $a=101, 103, 107, 109, 113$. Let

$$U_1\cup S_1=\{ 77,95,93,92 \} \cup \{ 83,101,107,113\} ,$$
$$U_2\cup S_2=\{ 91,85,99,94 \} \cup \{ 79,89,97,103,109\} .$$

\noindent {\bf Case 11:} $a=121,127$. Let
$$U_1\cup S_1=\{ 119, 115,117,118 \} \cup \{  121,127\} .$$

\noindent {\bf Case 12:} $a=131$. Let
$$U_1\cup S_1=\{ 119,115,117,122 \} \cup \{ 121,127,131\} .$$

\noindent {\bf Case 13:} $a=137,139,143$. Let
$$U_1\cup S_1=\{ 133,125,129,134 \} \cup \{ 127,131,137,139,143\} .$$

\noindent {\bf Case 14:} $a=149,151,157$. Let
$$U_1\cup S_1=\{ 119,125,129,128 \} \cup \{ 131,137,143,149\} ,$$
$$U_2\cup S_2=\{ 133,145,141,136 \} \cup \{ 121,127,139,151,157,181\} .$$

\noindent {\bf Case 15:} $a=163,167$. Let
$$U_1\cup S_1=\{ 161,155,153,158 \} \cup \{ 157,163,167\} .$$

\noindent {\bf Case 16:} $a= 169,173, 179,181$. Let $U_i\cup S_i
(i=1,2)$ be as in Case 14. Let
$$U_3\cup S_3=\{ 161,155,159,164  \} \cup \{ 163,167,169,173,179\} .$$

\noindent {\bf Case 17:} $a=187$. Let
$$U_1\cup S_1= \{161,155,159,158   \} \cup \{143,149,163,167,173,179 \} ,$$
$$U_2\cup S_2= \{133,145,117,142  \} \cup \{127,137,157,187\} ,$$
$$U_3\cup S_3= \{119,115,123,124  \} \cup \{121,131,139,151  \} ,$$
$$U_4\cup S_4= \{185,183,178\} \cup \{169,181\}.$$

\noindent {\bf Case 18:} $a=191,193,197$. Let
$$U_1\cup S_1= \{161,185,177,176   \} \cup \{149,167,173,179,181,191,197 \} ,$$
$$U_2\cup S_2= \{133,145,153,148   \} \cup \{121,151,157,163,169,193  \} ,$$
$$U_3\cup S_3= \{119,125,129,134   \} \cup \{127,131,137,139,143  \} .$$
If $a=197,191$, let$$U_4\cup S_4= \{155,183,182 \} \cup \{187
\}.$$ If $a=193$, let$$U_4\cup S_4= \{175,183,178\} \cup \{187  \}
.$$

\noindent {\bf Case 19:} $a=199$. Let
$$U_1\cup S_1= \{161,145,141,146   \} \cup \{131,137,149,173 \} ,$$
$$U_2\cup S_2= \{133,125,123,128  \} \cup \{121,127\} ,$$
$$U_3\cup S_3= \{119,115,129,164  \} \cup \{139,143,179,199  \} ,$$
$$U_4\cup S_4= \{185,189,194 \} \cup \{187,191,193,197\},$$
$$U_5\cup S_5= \{175,171,172 \} \cup \{151,157,163,167,169,181\}.$$

This completes the proof of Conjecture \ref{conj2} for $k=4$.

\section{Proof of Theorem \ref{mainthm2} and Conjecture \ref{conj1} for
$k=4$}\label{k=4}

Let
$$A(48, 4)=(F_4\setminus \{ 7 \})\cup \{ 11\} .$$
Then $A(48,4)$ does not contain $5$ pairwise coprime integers,
$|A(48, 4)|=|F_4\cap [1, 48]|$ and $A(48, 4)\not=F_4\cap [1, 48]$.

 We will prove that, if $|A(n,4)|\ge |F_4\cap [1, n]|$, then $|A(n,
4)|=|F_4\cap [1, n]|$ for $7\le n\le 48$ and  $A(n, 4)=F_4\cap [1,
n]$ for $49\le n\le 199=210-11$.

Let $W_1$ be the set of all primes together with 1.
$$ W_2=\{ 11^2,  7^2, 5^2, 3^2, 2^2\},\quad  W_3
=\{ 11\times 13, 7\times 19, 5\times 23, 3\times 17, 2^5\},$$
$$ W_4=\{ 11\times 17, 7\times 13, 5^3, 3^3, 2^3\},\quad  W_5
=\{ 13^2, 7\times 11, 5\times 17, 3^4, 2^4\},$$
\begin{eqnarray*}
W_6=[1,n]\setminus (W_1\cup \cdots \cup W_5) =[1,n]\cap (F_4
\setminus Y_1),\end{eqnarray*} where
$$Y_1=\{ 2, 3, 5, 7, 49, 25, 9, 4, 133, 115,
51, 32, 91, 125, 27, 8,77, 85, 81, 16\} .$$ It is clear that
 for each $1\le i\le 5$, the integers in $W_i$
are pairwise coprime. Hence, for $7\le n\le 199$ we have
$$|A(n,4)\bigcap W_1|\le 4=|[1,n]\cap \{ 2, 3, 5, 7\} | \text{ for } n\ge 7,$$
$$|A(n,4)\bigcap W_2|\le |[1,n]\cap \{ 49, 25, 9, 4\} |,$$
$$|A(n,4)\bigcap W_3|\le |[1,n]\cap \{ 133, 115, 51, 32\} |,$$
$$|A(n,4)\bigcap W_4|\le |[1,n]\cap \{ 91, 125, 27, 8\} |,$$
$$|A(n,4)\bigcap W_5|\le |[1,n]\cap \{ 77, 85, 81, 16\} |,$$
\begin{eqnarray*}|A(n,4)\cap W_6|\le  |W_6|
=|F_4\cap [1,n]|-|[1,n]\cap Y_1 |,\end{eqnarray*} Since $[1,
n]\subseteq \bigcup_{i=1}^6 W_i$, we have
\begin{equation}\label{mainthm2lem}|A(n,4)|=\sum_{i=1}^6 |A(n,4)\cap W_i|\le |F_4\cap
[1,n]|.\end{equation} Hence, if $7\le n\le 199$ and $|A(n,4)|\ge
|F_4\cap [1, n]|$, then $|A(n,4)|=|F_4\cap [1, n]|$. This implies
that $f(n,4)=|E(n,4)$ for all $n\ge 7$. So Conjecture \ref{conj1}
for $k=4$ is true.

Now we assume that $49\le n\le 199$. By \eqref{mainthm2lem} we have
$$|A(n,4)\cap W_1|=4,\quad |A(n,4)\cap W_2|=4.$$
By $|A(n,4)\cap W_2|=4$ we know that if $(m, 2\times 3\times 5\times
7\times 11)=1$ and $1\le m\le 199$, then $m\notin A(n,4)$. Thus
\begin{equation*}A(n,4)\cap W_1\subseteq \{ 2, 3, 5, 7, 11\}, \end{equation*}
\begin{equation}\label{w}
A(n,4)\subseteq (F_4\cup \{ 11, 121, 143, 187\})\cap
[1,n].\end{equation}

Suppose that $A(n,4)\cap \{ 11,  121, 143,187\} \not=\emptyset $.

Since $A(n,4)\cap W_1\subseteq \{ 2, 3, 5, 7, 11\}$ and
$|A(n,4)\cap W_1|=4$, there exists a prime $p\in \{ 2, 3, 5, 7\}$
such that $A(n,4)\cap W_1=\{ 2, 3, 5, 7, 11\}\setminus \{ p\}$,
then $ p, p^2, 13p, 17p, 19p\notin A(n,4)$. Since $n\ge 49$,
$p<p^2\le n$, $13p<17p<121$ and $19p<143$, we have
$$| \{ 11,  121, 143,187\}\cap [1, n]|< |\{ p, p^2,
13p, 17p, 19p\} \cap [1, n]|.$$ Hence \begin{eqnarray*}|A(n,4)|&\le&
|(F_4\cup \{ 11, 121, 143, 187\})\cap [1,n]|\\
&& -|\{ p, p^2,
13p, 17p, 19p\}\cap [1,n]|\\
&=& |F_4\cap [1,n]|+|\{ 11, 121, 143, 187\})\cap [1,n]|\\
&& -|\{ p, p^2,
13p, 17p, 19p\}\cap [1,n]|\\
&<& |F_4\cap [1,n]|,\end{eqnarray*} a contradiction.

Hence $A(n,4)\cap \{ 11,  121, 143,187\} =\emptyset $. By
$|A(n,4)|\ge |F_4\cap [1,n]|$ and \eqref{w} we have
$A(n,4)=F_4\cap [1,n]$ for $49\le n\le 199$.

Now Theorem \ref{mainthm2} follows from  Theorem \ref{generalthm}
and Conjecture \ref{conj2}.

\section{Final Remarks}

It seems that the method in this paper can be used for large $k$
with more complicated arguments. We pose several problems for
further research.

\begin{problem} What is the smallest positive integer $k$ for which Erd\H os
original conjecture is false? \end{problem}

\begin{problem} Is Erd\H os
original conjecture true or false for infinitely many positive
integers $k$?
\end{problem}

\begin{problem} Is
$$\limsup_{k\to\infty} \sup_{n\ge 1} (f(n,k)-E(n,k))<+\infty ?$$
In particular, is
$$\limsup_{k\to\infty} \sup_{n\ge 1} (f(n,k)-E(n,k))=1?$$

\end{problem}

\begin{problem} Is $E(k)=p_k^2$ true or false for infinitely many $k$?
\end{problem}

Let $p_i$ be the $i-$th prime. Ahlswede and  Khachatrian
\cite{Ahlswede} proved that if $$(H) \quad p_{t+7}p_{t+8}\le
n<p_tp_{t+9}, \quad p_{t+9}<p_t^2,$$ then for $k=t+3$,
$$f(n,k)>|E(n,k)|.$$

As remark in \cite{Ahlswede}, (H) holds for $t=209$. We can verfy
that second such $t$ is 1823. We pose the following conjecture.

\begin{conjecture} The set of $k$ for which Erd\H os
original conjecture is false has the density zero.\end{conjecture}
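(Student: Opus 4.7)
The plan is to quantify how rare the Ahlswede--Khachatrian obstruction actually is and then to rule out any other obstruction on a set of $k$ of positive density. Recall that hypothesis (H) from \cite{Ahlswede} produces a counterexample at $k=t+3$ whenever $p_{t+7}p_{t+8}\le n<p_tp_{t+9}$ with $p_{t+9}<p_t^2$; the second inequality is automatic for large $t$, so the question reduces to how often the interval $[p_{t+7}p_{t+8},\,p_tp_{t+9})$ is nonempty, i.e.\ how often
\[
\log p_{t+9}-\log p_{t+8}>\log p_{t+7}-\log p_{t}.
\]
This is a delicate comparison of a short prime gap against a sum of seven gaps. I would first try to prove, using results on the fine distribution of prime gaps (Maier, Goldston--Pintz--Y\i ld\i r\i m, Maynard), that this inequality holds only on a set of $t\le X$ of size $o(X)$. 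Heuristically this is a race between two quantities agreeing on average, so either small-gap technology or a careful Hardy--Littlewood $k$-tuples count should give the bound; the data point noted in the paper that the first two values of $t$ satisfying (H) are $209$ and $1823$ is encouraging.

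Second, and more seriously, I would attempt a structural classification of \emph{all} possible counterexamples, extending the good-set / $U_i\cup S_i$ decomposition developed in Lemmas \ref{goodset} and \ref{4-set}. Any extremal $A(n,k)$ with $|A(n,k)|>|E(n,k)|$ must replace a subset $R\subseteq E(n,k)$ by a strictly larger subset $R'\subseteq[1,n]\setminus E(n,k)$; each element of $R'$ is a product of primes $\ge p_{k+1}$, and the failure of selecting $k+1$ pairwise coprime integers forces the primes occurring in $R'$ to satisfy explicit multiplicative inequalities in a bounded window around $p_k$. The goal is to show that the list of admissible \emph{types} of replacement is finite and that each is governed by an (H)-like inequality among $p_{k+s}$ for $|s|$ bounded by an absolute constant.

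Third, for each such type I would invoke prime-gap estimates or the Hardy--Littlewood prime-tuples heuristic to show that the corresponding inequality holds only on a density-zero set of $k$, and sum over the finitely many types to conclude. The main obstacle will be the classification step: the case analyses of Sections 4 and 6 are already intricate for $k=3,4$, and Ahlswede--Khachatrian shows that genuinely new replacement patterns appear at $k=212$, so a uniform classification for all large $k$ may well fail. As a fallback I would aim for a conditional density-zero result under Hardy--Littlewood, or settle for bounding the upper density of the bad set by treating only the (H)-family and arguing that any further family must be captured by a similar prime-gap comparison of bounded length, yielding a density bound of the same $o(1)$ order.
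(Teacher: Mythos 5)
The statement you are addressing is posed in the paper as an open conjecture: the authors offer no proof of it, so there is no argument of theirs to compare yours against, and what you have written is a research programme rather than a proof. Both pillars of the programme are unsupported. The classification step --- showing that \emph{every} counterexample to the Erd\H os conjecture is governed by one of finitely many (H)-like inequalities among primes $p_{k+s}$ with $|s|$ bounded --- is precisely the hard open problem, and you concede yourself that it may fail; the good-set machinery of Lemmas \ref{goodset} and \ref{4-set} is tailored to the fixed window $[-p_{k+1}+1,\,p_1p_2\cdots p_k-p_{k+1}]$ for $k=3,4$ and gives no handle on a classification uniform in $k$, while the example of Ahlswede and Khachatrian at $k=212$ shows that new replacement patterns do appear. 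The analytic step rests on the Hardy--Littlewood heuristic or on unproved statements about the fine distribution of prime gaps, so even if the classification were granted, the conclusion would be conditional at best.

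Worse, your first step is heuristically false. Solvability of $p_{t+7}p_{t+8}\le n<p_tp_{t+9}$ amounts to $\log p_{t+9}-\log p_{t+8}>\log p_{t+7}-\log p_t$, which up to a negligible correction says that the single gap $p_{t+9}-p_{t+8}$ exceeds the sum of the seven consecutive gaps from $p_t$ to $p_{t+7}$ (the side condition $p_{t+9}<p_t^2$ is automatic). Under the standard Poisson model for primes in short intervals --- exactly the model underlying the Hardy--Littlewood and small-gap technology you propose to invoke --- the normalized gaps behave like independent exponentials, and the probability that one exponential variable exceeds the sum of seven independent copies is $2^{-7}=1/128$, a positive constant. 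Thus the heuristics you cite predict that the (H)-family alone has \emph{positive} density in $t$ (hence in $k=t+3$), which would refute the conjecture rather than prove it; in particular your claim that the gap inequality holds for $o(X)$ values of $t\le X$ cannot be established by those methods, and the sparseness of the data points $t=209$ and $t=1823$ is not evidence of density zero. In short, the conjecture is genuinely open, its truth is unclear even heuristically, and the proposal contains neither a proof nor a viable path to one.
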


\section{Acknowledgements}We would like to thank the referee for his/her
comments.

\end{document}